\newtheorem{theorem}{Theorem}[section]
\newtheorem{lemma}[theorem]{Lemma}
\newtheorem{prop}[theorem]{Proposition}
\newtheorem{corollary}[theorem]{Corollary}
\theoremstyle{definition}
\newtheorem{example}[theorem]{Example}
\newtheorem{remark}[theorem]{Remark}
\numberwithin{equation}{section}
\numberwithin{equation}{section}
\newcommand{\R}{\mathbb{R}}
\renewcommand{\d}{\mathrm{d}}
\renewcommand{\epsilon}{\varepsilon}
\renewcommand{\P}{\mathbb{P}}
\newcommand{\E}{\mathbb{E}}
\newcommand{\1}{1}
\newcommand{\F}{\mathcal{F}}
\newcommand{\RR}{\mathcal{R}}
\newcommand{\I}{\mathcal{I}}
\newcommand{\KK}{\mathcal{K}}
\newcommand{\LL}{\mathcal{L}}
\renewcommand{\SS}{\mathcal{S}}
\newcommand{\UU}{\mathcal{U}} 
\newcommand{\HH}{\mathrm{H}} 
\begin{document}





 
\title{Representation of Self-similar Gaussian Processes}
\date{\today}

\author[Yazigi, A.]{Adil Yazigi}
\address{Adil Yazigi\\
Department of Mathematics and Statistics\\ 
University of Vaasa
P.O.Box 700\\ 
FIN-65101 Vaasa\\ 
Finland}
\email{adil.yazigi@uwasa.fi}

\begin{abstract} We develop the canonical Volterra representation for  a self-similar   Gaussian process by using the Lamperti transformation of the corresponding stationary Gaussian process, where this latter one admits a canonical integral representation under the assumption of pure non-determinism. We apply the representation obtained to the equivalence in law for self-similar Gaussian processes.
\end{abstract}

\thanks{The author wishes to thank Tommi Sottinen for discussions and helpful comments. The author also thanks  the Finnish Doctoral Programme in Stochastics and Statistics and  the Finnish Cultural Foundation for financial support.}
\maketitle

{\small
\noindent\textbf{Mathematics Subject Classification (2010):} 
60G15, 60G18, 60G22.

\noindent\textbf{Keywords}: 
Self-similar processes; Gaussian processes; canonical Volterra representation; Lamperti transformation; stationary Gaussian process; equivalence in law; homogeneous kernels.}


\section{Introduction and preliminaries}

In this paper, we will construct the canonical Volterra representation for a given self-similar centered Gaussian processes. The role of the canonical Volterra representation  which was first introduced by Levy  in \cite{Lev1} and \cite{Lev2}, and later developed by Hida in \cite{hida}, is to provide an integral representation for a Gaussian process $X$  in terms of a Brownian motion $W$ and a non-random Volterra kernel  $k$ such that the expression
$$X_t=\int_0^t k(t,s)\, \d W_s$$
holds for all $t$ and the Gaussian processes $X$ and $W$ generate the same filtration. It is known, see \cite{Cel} and \cite{Lev1}, that if  the kernel $k$ satisfies the homogeneity property for some degree $\alpha$, i.e. $k(at,as)=a^{\alpha}k(t,s)$, $a>0$, the Gaussian process $X$ is self-similar with index $\alpha+ \frac 12$. Thus, the main goal of this paper is to give, under some suitable conditions, a general construction of the canonical Volterra representation  for  self-similar Gaussian processes, and  which also guaranties the homogeneity property of the kernel. In section 2, the linear Lamperti transform that defines the one-one correspondence between stationary processes  and self-similar processes, will be used to express the explicit form of the canonical Volterra representation for self-similar Gaussian processes in the light of the classical canonical representation of the stationary processes given by Karhunen in \cite{Karhunen}. In section 3, we give an application of the representation obtained to  a Gaussian process equivalent in law to the self-similar Gaussian process.

In our mathematical settings, we take $T>1$ to be a fixed time horizon, and  on a complete probability space $(\Omega, \F, \P)$ we consider a centered Gaussian process $X=(X_t;  t\in\left[ 0,T \right])$ that enjoys the  self-similarity property for some $\beta >0$, i.e.
$$
(X_{at})_{0 \leq t \leq T/a}\overset{d}{=} (a^\beta X_t)_{0 \leq t \leq T},\quad  \mbox{for all}\; a>0,
$$
where $\overset{d}{=}$ denotes equality in distributions, or equivalently ,
\begin{equation}\label{cov1}
r(t,s)=\E (X_tX_s)=T^{2\beta}\, r\left(\frac{t}{T},\frac{s}{T}\right), \quad 0\leq t, s \leq T.
\end{equation}
In particular, we have $r(t,t)=t^{2\beta}\E(X_1^2)$, which is finite and continuous function at every $(t,t)$ in $[0,T]^2$, and therefore, is continuous at every $(t,s)\in [0,T]^2$, see \cite{loeve}. A consequence of the continuity of the covariance function $r$ is that $X$ is mean-continuous. 

We denote by $\HH_X(t)$  the  closed linear subspace of  $L^2([0,T])$  generated by Gaussian random variables $X_s$ for $s \leq t$, and by $(\F_t^X)_{t\in[0T]}$, where $\F_t^X:=\sigma (X_s, s\leq t)$, the completed natural filtration of $X$. We call the \emph{Volterra representation} of $X$ the integral representation of the form 
\begin{equation}\label{volrep}
X_t= \int_0^t k(t,s)\,d W_s,\quad t \in [0,T],
\end{equation}
where $W=(W_t;  t\in\left[ 0,T \right])$ is a standard Brownian motion and  the kernel $k(t,s)$ is a Volterra kernel, i.e. a measurable function on $[0,T]\times[0,T]$ that satisfies  $\int_0^T \int_0^t k(t,s)^2\,\d s \, \d t< \infty$, and  $k(t,s)=0$ for $s>t$. The Gaussian process $X$ with such representation is called a \emph{Gaussian Volterra process}, provided with $k$ and $W$.

Moreover, the Volterra representation is said to be \emph{canonical} if the \emph{canonical property} 
$$\F_t^X=\F_t^W$$ 
holds for all $t$, or equivalently 
\begin{equation}\label{cano}
\HH_X(t)= \HH_W(t), \quad \mbox{for all} \;\, t.
\end{equation}

\begin{remark}\label{remark}
\begin{enumerate}
\item An equivalent to the canonical property is that if  there exists a random variable  $\eta=\int_0^T \phi(s)\, \d W_s$ , $\phi \in L^2([0,T])$, such that it  is independent of $X_t$ for all $0\leq t \leq T$, i.e. $\int_0^t k(t,s)\, \phi(s)\, \d s=0$ , one has $\phi\equiv0$. This means that the family $\left\{k(t,\cdot), 0\leq t \leq T\right\}$ is free and spans a vector space that is dense in  $L^2([0,T])$. If we associate with the canonical kernel $k$ a Volterra integral operator $\KK$ defined on $ L^2([0,T])$ by $\KK\phi(t)=\int_0^t k(t,s)\, \phi(s)\, \d s $, it follows from the canonical property \eqref{cano} that $\KK$ is injective  and $\KK(L^2([0,T]))$ is dense in $L^2([0,T])$. The covariance integral operator $\RR$ associated with the kernel $r(t,s)$ has the decomposition $\RR=\KK \KK^*$, where $\KK^*$ is the adjoint operator of $\KK$. In this case, the covariance $r$  is factorable, i.e. 
$$r(t,s)=\int_0^{t\wedge s} k(t,u)k(s,u)\, \d u.$$
\item  A special property for a Volterra integral operator is that  it has no eigenvalues, see \cite{gohberg}.
\end{enumerate}
\end{remark}

\section{The Canonical Volterra representation and self-similarity}

The Gaussian process $X$ is $\beta$--self-similar, and according to  Lamperti \cite{Lamp}, it can be transformed into a  stationary Gaussian  process $Y$ defined by:
\begin{equation}\label{stat}
Y(t):=e^{-\beta t} X(e^t), \quad t \in ( -\infty, \log T].
\end{equation}
Conversely, $X$ can be recovered from $Y$ by the inverse Lamperti transformation
\begin{equation}
X(t)=t^{\beta}Y(\log t),\quad t \in [0,T].
\end{equation}
It is obvious that the mean-continuity of the process $Y$ follows from the fact that
$$\E (Y_t-Y_s)^2=2 \left(r(1,1)-e^{-(t-s)\beta} r(e^{t-s},1)\right)$$
converges to zero when $t$ approaches $s$. As was shown by Hida \& Hitsuda (\S 3, \cite{HidaHitsuda}), which is a well-known classical result that has been first established by Karhunen (\S 3, Satz 5, \cite{Karhunen}), the stationary Gaussian process $Y$ admits the canonical representation
\begin{equation}\label{statcano}
Y_t= \int_{-\infty}^t G_T(t-s) \, \d W_s^*,
\end{equation}
where  $G_T$ is a measurable function that belongs to $L^2(\R, \d u)$ such that  $G_T(u)=0$ when $u<0$, and $W^*$ is a standard Brownian motion satisfying  the canonical property, i.e.,  $\HH_Y(t)=\HH_{W^*}(t)$, $t \in [0,T]$. A necessary and sufficient condition for the existence of the representation \eqref{statcano} is that $Y$ is purely non-deterministic.  Following Cramer  \cite{cramer},  a  process $Z$ is  purely non-deterministic if and only if the condition
\begin{equation}\label{condition}
\bigcap_{t} \HH_{Z}(t)=\{0\} \tag{C},
\end{equation}
 is fulfilled, where $\{0\}$ is the $L^2$--subspace spanned by the constants. The condition  \eqref{condition} means that  the remote past is trivial, i.e. $\F_{0^+}^Z$ is trivial; see also  \cite{Hida},  \cite{HidaHitsuda} and \cite{Karhunen}. 

Next, we shall extend the property of pure non-determinism  to the  self-similar centered Gaussian process $X$.

\begin{theorem}\label{GVP} 
The self-similar centered Gaussian process $X=(X_t;  t\in\left[ 0,T \right])$  satisfies the condition \eqref{condition} if and only if there exist a standard Brownian motion $W$ and a Volterra kernel  $k$ such that $X$ has the representation
\begin{equation}\label{GVPrep}
X_t= \int_0^t k(t,s)\, \d W_s,
\end{equation}
where the Volterra kernel $k$ is defined by
\begin{equation}\label{canokernel}
k(t,s)= t^{\beta-\frac 12}\, F\left(\frac{s}{t}\right), \quad s<t,
\end{equation}
 for some function $F \in L^2(\R_+,\d u)$ independent of $\beta$, with $F(u)=0$ for $1<u$.

Moreover,  $\HH_X(t)= \HH_W(t)$ holds for each $t$.
\end{theorem}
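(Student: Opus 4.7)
The plan is to transfer the problem from $X$ to its Lamperti stationary companion $Y_t = e^{-\beta t} X(e^t)$, invoke Karhunen's canonical moving-average representation \eqref{statcano} for stationary processes, and then pull the representation back to $X$ by the inverse Lamperti change of variables $u = e^s$. The homogeneity form $k(t,s) = t^{\beta - 1/2} F(s/t)$ should emerge automatically from the fact that Karhunen's kernel for $Y$ is a function of a single time-difference.

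First I would verify that condition \eqref{condition} for $X$ is equivalent to the same condition for $Y$. Since the Lamperti map is a bijective linear correspondence of Gaussian variables via the strictly monotone time change $t \mapsto \log t$, one has $\HH_X(t) = \HH_Y(\log t)$ for every $t > 0$, and hence $\bigcap_{t > 0} \HH_X(t) = \bigcap_{s \in \R} \HH_Y(s)$, so the two conditions coincide. Combined with the mean-square continuity of $Y$ already recorded in the text, this lets me apply Karhunen's theorem to produce a standard Brownian motion $W^*$ on $\R$ and a kernel $G_T \in L^2(\R)$ with $G_T(u) = 0$ for $u < 0$ such that
\begin{equation*}
Y_t = \int_{-\infty}^t G_T(t-s)\, \d W_s^*, \qquad \HH_Y(t) = \HH_{W^*}(t), \quad t \leq \log T.
\end{equation*}

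Next I would push this representation through the inverse Lamperti transform. Define
\begin{equation*}
W_t := \int_{-\infty}^{\log t} e^{s/2}\, \d W_s^*, \qquad t \in (0,T];
\end{equation*}
the Wiener isometry gives $\E[(W_t - W_r)^2] = t - r$, so $W$ is a standard Brownian motion on $[0,T]$, and the isometry $L^2((-\infty,\log T]) \to L^2((0,T])$ sending $f(s)$ to $f(\log u)u^{-1/2}$ identifies $\HH_{W^*}(\log t)$ with $\HH_W(t)$. Applying this isometry to the Karhunen representation and multiplying by $t^\beta$ yields
\begin{equation*}
X_t = t^\beta Y(\log t) = \int_0^t t^{\beta} u^{-1/2}\, G_T\bigl(\log(t/u)\bigr)\, \d W_u,
\end{equation*}
and the integrand is precisely $t^{\beta - 1/2} F(u/t)$ with $F(v) := v^{-1/2} G_T(-\log v)\,\mathbf{1}_{(0,1)}(v)$. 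The substitution $x = -\log v$ gives $\|F\|_{L^2(\R_+)} = \|G_T\|_{L^2(\R)} < \infty$, and since $G_T$ was produced from $Y$ alone the function $F$ carries no further dependence on the scaling exponent $\beta$. The canonical identity $\HH_X(t) = \HH_W(t)$ then follows from chaining $\HH_X(t) = \HH_Y(\log t) = \HH_{W^*}(\log t) = \HH_W(t)$.

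The reverse direction is short: if $X$ admits a canonical Volterra representation then $\HH_X(t) = \HH_W(t)$, and $\bigcap_t \HH_W(t) = \{0\}$ for a Brownian motion, so \eqref{condition} holds for $X$. The main technical obstacle I foresee is verifying cleanly that the time-changed process $W$ defined above is honestly a standard Brownian motion with $\F_t^W = \F_{\log t}^{W^*}$ down to the endpoint $t = 0$, and that the Wiener-integral identity used to pass from the $\d W_s^*$ representation of $Y$ to the $\d W_u$ representation of $X$ is fully justified; this is also the place where pure non-determinism of $Y$ is genuinely used, since it rules out any non-trivial Gaussian mass being transported from $s = -\infty$ into the initial value $W_0$.
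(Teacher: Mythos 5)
Your proposal is correct and follows essentially the same route as the paper: reduce to the Lamperti companion $Y$, note that $\bigcap_t \HH_X(t) = \bigcap_s \HH_Y(s)$, invoke Karhunen's canonical representation, and pull back through the time change $\d W_s = s^{1/2}\,\d W^*_{\log s}$ to obtain $F(u) = u^{-1/2} G_T(\log u^{-1})$ with the same $L^2$ verification by substitution. Your explicit statement that $F$ vanishes on $(1,\infty)$ is in fact the correct support condition (matching the theorem statement), and your brief treatment of the converse direction via $\bigcap_t \HH_W(t) = \{0\}$ fills in a step the paper leaves implicit.
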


\begin{remark}\label{notpnd}
In the case where the process $X$ is  \emph{trivial self-similar}, i.e. $X_t=t^{\beta}W_1$, $ 0\leq t \leq T$, the condition \eqref{condition} is not satisfied since  $\bigcap_{t\in (0,T)} \HH_{X}(t)=\HH_W(1)$. Thus, $X$ has no Volterra representation in this case.
\end{remark}

\begin{proof}  The fact that $X$ is purely non-deterministic is equivalent to that $Y$ is purely non-deterministic since
$$\bigcap_{t\in (0,T)} \HH_{X}(t)=\bigcap_{t\in (0, T)} \HH_{Y}(\log t)=\bigcap_{t\in (-\infty, \log T)} \HH_{Y}(t).$$
Thus $Y$ admits the representation \eqref{statcano} for some square integrable kernel $G_T$ and a standard Brownian motion $W^*$.  By the inverse Lamperti transformation, we obtain 
$$
X(t)=\int_{-\infty}^{\log t} t^{\beta} G_T(\log t- s) \, \d W_s^*=\int_0^t t^{\beta} s^{- \frac 12} G_T\left(\log \frac ts\right) \, \d W_s,
$$
where $\d W_s= s^{ \frac 12}\d W^*_{\log s}$. We take the Volterra kernel $k$ to be defined as  $k(t,s)= t^{\beta-\frac 12}\, F\left(\frac{s}{t}\right)$, where  $F(u)=u^{-\frac 12}G_T(\log u^{-1}) \in L^2(\R_+, \d u)$ vanishing when $u<1$ since $G_T(u)=0$ when $u<0$, i.e. for $t<s$, we have  $F(\frac{s}{t})=0$, and then, $k(t,s)=0$. Indeed,
\begin{eqnarray*} 
\int _0^{\infty}  F(u)^2 \, \d u =\int _0^{\infty}  G_T(\log u^{-1})^2 \, \frac{\d u}{u}=\int_{-\infty}^{\infty} G_T(v)^2\, \d v< \infty,
\end{eqnarray*}
and
\begin{eqnarray*} 
\int _0^T \int_0^t  F\left(\frac st\right)^2 \,\d s\, \d t &=&\int _0^T t \, \d t  \int _0^1  F(u)^2 \, \d u \\
&=& \int _0^T t \, \d t \int_0^{\infty} G_T(v)^2\, \d v< \infty.
\end{eqnarray*}
Thus,
\begin{eqnarray*} 
\int_0^T \int_0^t t^{2\beta-1} F\left(\frac{s}{t}\right)^2 \, \d s\, \d t
&=& \left(\int_0^T t^{2\beta}\right) \left(\int _0^1  F\left(u\right)^2 \, \d u\right)\, \d t <\infty
\end{eqnarray*}
Considering the closed linear subspace $\HH_{\d W}(t)$ of  $L^2([0,T])$ that is generated by $W_s-W_u$ for all $u\leq s\leq t$, we have $\HH_{\d W}(t)=\HH_W(t)$ since $W_0=0$, and therefore, the canonical property follows from the equalities
$$ \HH_X(t)=\HH_Y(\log t)=\HH_{\d W^*}(\log t)=\HH_{\d W}(t)=\HH_W(t).$$
\end{proof}

\begin{example}[Fractional Brownian motion]
The fractional Brownian motion (fBm) on $[0,T]$ with index $H\in (0,1)$ is a centered Gaussian process $B^H = (B_t; 0\leq t\leq T)$ with the covariance function $R_H(t,s)=\frac 12(s^{2H} + t^{2H} - |t - s|^{2H} )$. The fBm is  $H$-self--similar, and following \cite{alios} and \cite{decreuse}, it admits the canonical Volterra representation with the canonical kernel 
\begin{eqnarray*}
k_H(t,s)&=& c_H  s^{\frac 12 -H} \int_s^t (u- s)^{H- \frac 32}\, u^{H- \frac 12}\, \d u, \quad \mbox{for}\; H> \frac 12,\\
k_H(t,s)&=& d_H \Biggl( \left(\frac ts\right)^{H-\frac 12}(t-s)^{H-\frac 12}\\
&& - \left(H- \frac 12\right) s^{\frac 12 -H}\int_s^t u^{H- \frac 32}\, (u- s)^{H- \frac 12}\, \d u \Biggr),
\quad \mbox{for}\; H< \frac 12, 
\end{eqnarray*}
where $c_H=\left(\frac{H(2H-1)}{B(2-2H,H-\frac 12)}\right)^{\frac 12}$,  $d_H=\left(\frac{2H}{(1-2H)B(1-2H,H+\frac 12)}\right)^{\frac 12}$, here $B$ denotes the Beta function. So, the function $F$ that corresponds to the canonical Volterra representation of fBm has the expressions:
\begin{equation*}
F(u)=c_H\left(u^{\frac 12 -H} \int_u^1 (z- u)^{H- \frac 32}\, z^{H- \frac 12}\, \d z\right), \quad \mbox{for}\; H> \frac 12,
\end{equation*}
and
\begin{equation*}
F(u)=d_H \left(\left(\frac 1u-1\right)^{H-\frac 12} - \left(H- \frac 12\right)\left(u\right)^{\frac 12 -H}\int_u^1 z^{H- \frac 32}\, (z- u)^{H- \frac 12}\, \d z\right),
\end{equation*}
for $H< \frac 12$.
\end{example}

A function $f(t,s)$ is said to be homogeneous with degree $\alpha$ if  the equality 
$$
f(at,as)=a^{\alpha}f(t,s),\quad a>0,
$$
 holds  for all $t,s$ in $[0,T]$. From the expression \eqref{canokernel} of the canonical kernel, it is easy to see that $k$ is homogeneous  with degree $\beta- \frac 12$, i.e. $k(t,s)=T^{\beta-\frac12}k(\frac tT, \frac sT)$, \,for all $s<t \in [0,T]$.

Given $X$ with the canonical Volterra representation \eqref{GVPrep}, let $\UU$ to be a bounded unitary endomorphism on $ L^2([0,T])$ with adjoint $\UU^*=\UU^{-1}$, and define the process $B=(B)_t:=(\UU^*(W))_t$ for each $t\in [0,T]$. Indeed, $B$ is a standard Brownian motion since the Gaussian measure is preserved under the unitary transformations. With the notation $k_t(\cdot):=k(t,\cdot)$, the Gaussian process associated with the kernel $(\UU k_t)(s)$ and the standard Brownian motion $B$ has same law as $X$. For the covariance operator, we write 
$$\RR=\KK \KK^*=\KK \UU^* \UU \KK^*=(\KK \UU^*)(\KK \UU^*)^*,$$
where the operator 
$\KK \UU^*$ is defined by 
$$(\KK \UU^*)\phi(t)=\int_0^t k(t,s)\, (\UU^* \phi)(s)\, \d s=\int_0^T(\UU k_t)(s) \, \phi(s)\, \d s, \quad \phi \in  L^2([0,T]).$$ 
The associated Gaussian process has then the integral representation $\int_0^T(\UU k_t)(s) \, \d B_s $ for all $t \in [0,T]$.
\begin{corollary}
For any  bounded unitary endomorphism $\UU$ on $ L^2([0,T])$,  the homogeneity of $k$ is preserved under $\UU$.
\end{corollary}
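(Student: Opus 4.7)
The plan is to verify the homogeneity identity $\tilde k(at,as) = a^{\beta-\frac12}\,\tilde k(t,s)$ for $a>0$, where $\tilde k(t,s) := (\UU k_t)(s)$ is the transformed kernel. The starting point is the explicit factorization $k(t,s) = t^{\beta-\frac12}\,F(s/t)$ supplied by Theorem \ref{GVP}, which, rewritten as a scaling identity at the level of $L^2$-functions, reads
$$
k_{at}(u) \;=\; a^{\beta-\frac12}\,k_t(u/a),
$$
with the convention that $k_t$ is extended by zero outside $[0,T]$ as needed.

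The direct route is to insert this into $(\UU k_{at})(as)$, then perform the substitution $u\mapsto au$ inside the integral representation of $\UU$, so that the factor $a^{\beta-\frac12}$ is pulled outside and the evaluation at $as$ of a dilation of $k_t$ is converted into an evaluation at $s$ of $\UU k_t$. This produces $a^{\beta-\frac12}\,(\UU k_t)(s) = a^{\beta-\frac12}\,\tilde k(t,s)$, which is the desired homogeneity.

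I expect the main obstacle to be the change-of-variable step, since a general unitary on $L^2([0,T])$ need not commute with dilations $u\mapsto u/a$ (which in general shift the underlying interval). The argument therefore has to lean on the explicit univariate structure of $k$ through $F$ rather than on abstract properties of $\UU$. A cleaner indirect route, which I would present as the main proof, is to observe that unitarity of $\UU$ gives $\langle \UU k_t,\UU k_s\rangle_{L^2} = \langle k_t,k_s\rangle_{L^2} = r(t,s)$, so the Gaussian process with kernel $\tilde k$ and Brownian motion $B$ has covariance $r$ and is therefore equivalent in law to the $\beta$-self-similar $X$; applying Theorem \ref{GVP} to this process identifies $\tilde k$ (up to the canonical normalization) with a kernel of the homogeneous form $t^{\beta-\frac12}\tilde F(s/t)$, from which the homogeneity of degree $\beta-\frac12$ is immediate.
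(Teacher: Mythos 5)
Your proposed main argument (the ``indirect route'') has a genuine gap. From $\langle \UU k_t,\UU k_s\rangle=\langle k_t,k_s\rangle=r(t,s)$ you correctly conclude that the process $\int_0^T(\UU k_t)(s)\,\d B_s$ has the same law as $X$; but Theorem \ref{GVP} applied to that process only asserts the existence of \emph{some} canonical Volterra kernel of the homogeneous form $t^{\beta-\frac 12}F(s/t)$, and nothing identifies that kernel with $\tilde k_t=\UU k_t$. Indeed $\UU k_t$ is in general not even a Volterra kernel --- it need not vanish for $s>t$, which is why the paper writes the transformed representation as $\int_0^T$ rather than $\int_0^t$ --- and a fixed covariance admits many non-canonical square-integrable factorizations, of which the theorem pins down only the canonical one. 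The phrase ``up to the canonical normalization'' is therefore carrying the entire burden of the proof. To see concretely that covariance-level information cannot close the gap, note that your argument applies verbatim to the reflection $\UU f(s)=f(T-s)$, which is unitary and preserves every inner product, yet $(\UU k_t)(s)=t^{\beta-\frac 12}F\bigl(\frac{T-s}{t}\bigr)$ visibly fails the pointwise identity $(\UU k_{at})(as)=a^{\beta-\frac 12}(\UU k_t)(s)$. A pointwise statement about the kernel cannot follow from equality of covariances alone.

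Your ``direct route'' correctly isolates the real issue --- a general unitary on $L^2([0,T])$ does not commute with dilations --- but then abandons it instead of resolving it, and that commutation is precisely what the paper's proof is about. The paper encodes the dilation as the unitary $\SS f(t)=T^{\frac 12}f(Tt)$, rewrites the homogeneity of $k$ as $k_t=T^{\beta}\SS^*k_{t/T}$, deduces $\UU k_t(s)=T^{\beta-\frac 12}(\SS\UU\SS^*k_{t/T})(s/T)$, and then reduces the whole corollary to the single identity $\SS\UU\SS^*k_{t/T}=\UU k_{t/T}$, which it argues through a Mellin-transform computation. In other words, the obstacle you flagged is not a technicality to be circumvented: it \emph{is} the statement, and neither of your two routes supplies the missing commutation step.
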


\begin{proof}
Let $\UU$ be a bounded unitary endomorphism  on $ L^2([0,T])$, and let the scaling operator $\SS f(t)= T^{\frac 12}f(Tt)$ with adjoint $\SS^*f(t)=T^{-\frac 12}f(\frac tT)$ to be defined for all $f \in L^2([0,T])$. The homogeneity of $k$ means that
$$k_t(s)=T^{\beta}(\SS^*k_{\frac tT})(s),$$
then we have
$$
\UU k_t(s)=T^{\beta}(\UU \SS^*k_{\frac tT})(s)=T^{\beta-\frac 12}(\SS \UU \SS^* k_{\frac tT})(\frac sT).
$$
To show the equality $\SS \UU \SS^* k_{\frac tT}=\UU  k_{\frac tT}$, we will use the Mellin transform 
\begin{eqnarray*}
\int_0^{\infty} (\SS \UU \SS^* k_{\frac tT})(s)\, s^{p-1}\, \d s &=&\int_0^{\infty}  (\UU \SS^* k_{\frac tT})(s)\, (\SS^* s^{p-1})\, \d s\\
&=& T^{ \frac 12-p}\int_0^{\infty}  (\UU \SS^* k_{\frac tT})(s)\,  s^{p-1}\, \d s\\
&=& T^{ \frac 12-p}\int_0^{\infty}  ( \SS^* k_{\frac tT})(s)\, (\UU^* s^{p-1})\, \d s\\
&=& T^{-p}\int_0^{\infty}  k_{\frac tT}(\frac sT) \, (\UU^*s^{p-1}) \, \d s \\
&=&  \int_0^{\infty}  k_{\frac tT}(u) \, (\UU^*u^{p-1}) \, \d u = \int_0^{\infty} \UU k_{\frac tT}(u)\,  u^{p-1} \, \d u,\\
\end{eqnarray*}
and the uniqueness property of the Mellin transform implies that $$\SS \UU \SS^* k_{\frac tT}=\UU  k_{\frac tT}.$$
\end{proof}

\begin{remark}
The fact that the  $\beta$-self--similar Gaussian process $X$ satisfies the condition \eqref{condition}, guaranties the existence of the canonical kernel $k$ which is homogeneous with degree $\beta- \frac 12$, and its homogeneity is preserved under unitary transformation. If we consider again the example  in Remark \ref{notpnd}, one has the representation 
$$X_t=\int_0^T t^{\beta} \1_{[0,1]}(s)\, \d W_s, 0 \leq t \leq T,$$
where $\1_{[0,1]}(s)$ is the indicator function. In this case, we see that the kernel $t^{\beta} \1_{[0,1]}(s)$ does not satisfy the homogeneity property of any degree.
\end{remark}


\section{Application to the equivalence in law}

In this section, we shall  emphasize the self-similarity property under the equivalence of laws of Gaussian processes. First, We recall the results shown by Hida-Hitsuda in the case of  Brownian motion, see \cite{HidaHitsuda} and \cite{Hitsuda}. Following  Hitsuda's representation theorem, a centered  Gaussian process $\widetilde{W}=(\widetilde{W}_t;  t\in\left[ 0,T \right])$ is  equivalent in law to a standard Brownian motion $W=(W_t;  t\in\left[ 0,T \right])$ if and only if  $\widetilde{W}$ can be represented in a unique way by 
\begin{equation}\label{canodecom}
 \widetilde{W}_t= W_t -\int_0^t \int_0^s l(s,u)\, \d W_u\, \d s,
\end{equation}
where $l(s,u)$ is a Volterra kernel, i.e. 
\begin{equation}\label{cond}
\int_0^T \int_0^t l(t,s)^2\, \d s\,\d t<\infty,\qquad l(t,s)=0\quad \mbox{for}\quad t<s,
\end{equation}
and such that the equality $\HH_{\widetilde{W}}(t)=\HH_{W}(t)$ holds for each $t$. We note here that the uniqueness of the canonical decomposition    \eqref{canodecom} is  in the sense that if  $l'$ is a Volterra kernel  and $W'=(W'_t;  t\in\left[ 0,T \right])$ is a standard Brownian motion such that for $0 \leq t \leq T$
$$W'_t -\int_0^t \int_0^s l'(s,u)\, \d W'_u\, \d s=W_t -\int_0^t \int_0^s l(s,u)\, \d W_u\, \d s,$$
then $l=l'$ and $W=W'$.

If we denote by $\P$ and $\widetilde{\P}$ the laws of $W$ and $\widetilde{W}$ respectively,  these two processes are equivalent in law if $\P$ and $\widetilde{\P}$ are equivalent, and the Radon-Nikodym density is given by
$$\frac{\d \widetilde{\P}}{\d \P}= \exp\left\{\int_0^T\int_0^s l(s,u)\d W_u\,\d W_s-\frac 12 \int_0^T\left(\int_0^s l(s,u)\d W_s \right)^2\d s\right\}.$$
 The centered Gaussian process $\widetilde{W}$ is  a standard Brownian motion under $\widetilde{\P}$ with $\widetilde{\E}(\widetilde{W}_t\widetilde{W}_s)=\E(W_tW_s)$,  hence, it is self-similar with index $\frac 12$ under $\widetilde{\P}$. It follows from \eqref{canodecom} that the covariance of $\widetilde{W}$ under $\P$ has the form of
\begin{eqnarray*}
\E(\widetilde{W}_t\widetilde{W}_s)&=& t\wedge s- \int_0^{t \wedge s} \int_u^s l(v,u) \, \d v\, \d u - \int_0^{t \wedge s} \int_u^t l(v,u) \, \d v\, \d u\\
&+& \int_0^t\int_0^s \int_0^{v_1\wedge v_2} l(v_1,u)\,l(v_2,u)\,\d u\, \d v_1\, \d v_2.
\end{eqnarray*}

The Hitsuda representation  can be extended  to the class of the canonical Gaussian Volterra processes, see \cite{baud} and \cite{Sottinen}. A centered  Gaussian process $\widetilde{X}=(\widetilde{X}_t;  t\in\left[ 0,T \right])$ is equivalent in law to a Gaussian Volterra process $X$ if and only if there exits a unique centered  Gaussian process, namely $\widetilde{W}$,  satisfying \eqref{canodecom} and  \eqref{cond}, and such that 
\begin{equation}\label{eqvol}
\widetilde{X}_t=\int_0^t k(t,s)\, \d \widetilde{W}_s= X_t-\int_0^t k(t,s)\int_0^s l(s,u)\, \d W_u\, \d s,
\end{equation} 
where the kernel $k(t,s)$ and the standard Brownian motion stand for \eqref{volrep}, the canonical Volterra representation of $X$. Moreover, we have $\HH_{\widetilde{X}}(t)=\HH_{X}(t)$ for all $t$.

Under the condition \eqref{condition}, the kernel $k$ is $(\beta - \frac 12)$-homogeneous, and the centered Gaussian process  $\widetilde{X}$ is $\beta$--self-similar under $\widetilde{\P}$ since $\widetilde{W}$ is a standard Brownian motion. It is obvious that if $\widetilde{X}$ has same law as $X$, it is $\beta$--self-similar under $\P$, and this condition is also necessary, see \cite{picard}. However, in the next proposition, we will use the homogeneity property of the Volterra kernel $l$ as a necessary and  sufficient condition for the self--similarity for the process $\widetilde{X}$, and equivalently for $\widetilde{W}$, under the law $\P$.

\begin{prop}\label{proposition}
Let $X=(X_t;  t\in\left[ 0,T \right])$ be a centered  $\beta$-self--similar Gaussian process satisfying the condition \eqref{condition}, then
\begin{enumerate}
\item a centered  Gaussian process $\widetilde{X}=(\widetilde{X}_t;  t\in\left[ 0,T \right])$ is equivalent in law to $X$ if and only if $\widetilde{X}$ admits a representation of the form of
\begin{equation}
\widetilde{X}_t=X_t- t^{\beta-\frac 12}\int_0^t z(t,s)\, \d W_s,\quad 0\leq t \leq T,
\end{equation}
where $W$ is a standard Brownian motion on $[0,T]$, and the kernel $z(t,s)$ is independent of $\beta$ and expressed by
$$z(t,s)=\int_s^t F\left(\frac ut\right)\, l(u,s)\, \d u, \quad s<t,$$
for a Volterra kernel $l$ and some function $F \in L^2(\R_+, \d u)$ vanishing on $(1,\infty]$.
\item In addition, $\widetilde{X}$ is $\beta$--self-similar if and only if $l\equiv 0$.
\end{enumerate}
\end{prop}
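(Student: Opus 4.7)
For part (i), my plan is to invoke the extended Hitsuda representation \eqref{eqvol}, substitute the explicit canonical kernel $k(t,s)=t^{\beta-\frac12}F(s/t)$ from Theorem \ref{GVP}, and apply Fubini on the triangle $\{0<u<s<t\}$ to obtain
\[
\int_0^t k(t,s)\int_0^s l(s,u)\,\d W_u\,\d s = t^{\beta-\frac12}\int_0^t\Bigl(\int_u^t F(s/t)\,l(s,u)\,\d s\Bigr)\d W_u.
\]
Inserting this into \eqref{eqvol} yields the asserted representation with $z(t,u)=\int_u^t F(s/t)\,l(s,u)\,\d s$; the converse is just \eqref{eqvol} read backwards: any pair $(l,F)$ as above produces an equivalent Gaussian process.

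The ``if'' direction of (ii) is immediate: $l\equiv 0$ forces $z\equiv 0$ and $\widetilde X=X$, which is $\beta$-self-similar by hypothesis. For the ``only if'' direction, assume $\widetilde X$ is $\beta$-self-similar. Part (i) rewrites $\widetilde X_t=\int_0^t \tilde k(t,s)\,\d W_s$ with $\tilde k(t,s)=t^{\beta-\frac12}[F(s/t)-z(t,s)]$, so the covariance reads $\widetilde r(t,u)=(tu)^{\beta-\frac12}\la g_t,g_u\ra_{L^2([0,T])}$ with $g_t(v):=F(v/t)-z(t,v)$ supported on $[0,t]$. Substituting $v=aw$, the self-similarity relation $\widetilde r(at,au)=a^{2\beta}\widetilde r(t,u)$ collapses to the Gram identity
\[
\la g_{at}(a\,\cdot),\,g_{au}(a\,\cdot)\ra_{L^2([0,T])} = \la g_t,g_u\ra_{L^2([0,T])},
\]
so the isometry $U_a\colon g_t\mapsto g_{at}(a\,\cdot)$ commutes with every projection onto $L^2([0,t])$ and must be multiplication by an $\{\pm 1\}$-valued function $\epsilon_a$. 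Since $\epsilon_1\equiv 1$ and $a\mapsto g_{at}(a\,\cdot)$ is continuous in $L^2$, connectedness forces $\epsilon_a\equiv 1$, so $z(at,aw)=z(t,w)$, i.e.\ $z(t,w)=z_0(w/t)$ for some one-variable function $z_0$.

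To translate this scale invariance of $z$ into a condition on $l$, substitute $u=tw$ in $z(t,u)=z_0(u/t)$ to get $z_0(x)=t\int_x^1 F(w)\,l(tw,tx)\,\d w$ for all admissible $t,x$. Differentiating in $t$ (with a smooth-approximation justification since $l$ is only $L^2$) and returning to $(u,s)$-variables yields
\[
\int_s^t k(t,u)\,\bigl[\,l(u,s) + u\,\partial_1 l(u,s) + s\,\partial_2 l(u,s)\,\bigr]\,\d u = 0, \qquad 0<s<t\le T.
\]
Injectivity of the Volterra operator $\KK$ (Remark \ref{remark}) forces the bracket to vanish a.e.; a trivial reparameterization then upgrades this to $\partial_\nu[\nu\,l(\nu u,\nu s)]\equiv 0$ for all $\nu>0$, so $l$ is homogeneous of degree $-1$.

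Finally, $(-1)$-homogeneity contradicts the $L^2$ bound \eqref{cond}: writing $l(t,s)=l(1,s/t)/t$ and changing $s=t\sigma$,
\[
\int_0^T\!\int_0^t l(t,s)^2\,\d s\,\d t = \Bigl(\int_0^T\tfrac{\d t}{t}\Bigr)\int_0^1 l(1,\sigma)^2\,\d\sigma,
\]
and the logarithmic divergence at $t=0$ forces $l(1,\sigma)\equiv 0$, hence $l\equiv 0$ by homogeneity. The main technical obstacle I expect is the rigorous justification of the differentiation-in-$t$ step under the merely $L^2$ regularity of $l$, with the connectedness argument ruling out $\epsilon_a\equiv -1$ as a secondary concern.
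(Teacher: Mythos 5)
Your part (i) and the ``if'' half of (ii) coincide with the paper's proof: Fubini applied to \eqref{eqvol} with $k(t,u)=t^{\beta-\frac12}F(u/t)$, and $l\equiv0\Rightarrow\widetilde X\overset{d}{=}X$. Your closing step also matches the paper's Lemma \ref{lemma}: a square-integrable $(-1)$-homogeneous Volterra kernel must vanish (your logarithmic-divergence computation is the same scaling argument in different clothing). Where you genuinely diverge is the middle of the ``only if'' direction. The paper first shows that $k(t,s)-t^{\beta-\frac12}z(t,s)$ is again a \emph{canonical} kernel --- using that $\KK(\I-\LL)$ is injective because the Volterra operator $\LL$ has no eigenvalues --- and then invokes Theorem \ref{GVP} applied to $\widetilde X$ to obtain $(\beta-\frac12)$-homogeneity of that kernel, hence $z(at,as)=z(t,s)$. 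You instead extract the same scale-invariance of $z$ directly from the covariance identity $\widetilde r(at,au)=a^{2\beta}\widetilde r(t,u)$ via a Gram-matrix/unitary argument; your subsequent passage from $z$ to $l$, applying injectivity of $\KK$ for each fixed $s$, is if anything more carefully routed than the paper's ``differentiate and use $F\neq0$'' step.

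Two points in your unitary argument do not close as written. First, the claim that $U_a$ commutes with every projection onto $L^2([0,t])$ silently uses that $\overline{\span}\{g_s:s\le t\}=L^2([0,t])$, which is exactly the canonicity of the perturbed representation that the paper establishes with the no-eigenvalue argument (alternatively it follows from $\HH_{\widetilde X}(t)=\HH_X(t)=\HH_W(t)$); you should say so. Second, and more seriously, ``connectedness forces $\epsilon_a\equiv1$'' is not valid: $\epsilon_a$ is a $\{\pm1\}$-valued \emph{function} of the space variable, and $a\mapsto\epsilon_a$ can vary continuously in $L^2$ while the set $\{\epsilon_a=-1\}$ grows from measure zero as $a$ moves away from $1$; continuity at $a=1$ does not exclude this. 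You need a separate argument to kill the sign --- for instance comparing $g_{at}(av)$ with $g_t(v)$ as $v\uparrow t$, where $z\to0$ and both functions reduce to $F(v/t)$, or routing through the uniqueness in the Hitsuda-type decomposition as the paper implicitly does via Theorem \ref{GVP}. With that one step repaired your proof is complete and constitutes a genuine, covariance-based alternative to the paper's operator-theoretic detour.
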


For the proof, we need the following lemma.
\begin{lemma}\label{lemma}
If a Volterra kernel  on $[0,T]\times[0.T]$ is homogeneous with degree $(-1)$, then  it vanishes on $[0,T]\times[0.T]$. 
\end{lemma}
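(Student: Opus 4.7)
My plan is to exploit the tension between homogeneity of degree $-1$ and the square-integrability requirement built into the definition of a Volterra kernel. Homogeneity of degree $-1$ means $l(at,as)=a^{-1}l(t,s)$ for every $a>0$; specialising to $a=1/t$ gives the representation $l(t,s)=t^{-1} l(1,s/t)$ valid for $0<s<t\le T$.

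Next, I would plug this expression into the square-integrability condition $\int_0^T\!\int_0^t l(t,s)^2\,\d s\,\d t<\infty$ required by \eqref{cond}. Performing the change of variables $s=tu$, $\d s=t\,\d u$ in the inner integral decouples the $t$ and $u$ dependences:
\begin{equation*}
\int_0^T\!\int_0^t l(t,s)^2\,\d s\,\d t=\int_0^T\!\int_0^1 \frac{1}{t^2}\,l(1,u)^2\cdot t\,\d u\,\d t=\left(\int_0^T\frac{\d t}{t}\right)\left(\int_0^1 l(1,u)^2\,\d u\right).
\end{equation*}

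Since $\int_0^T t^{-1}\,\d t=\infty$, finiteness of the left-hand side forces $\int_0^1 l(1,u)^2\,\d u=0$, so $l(1,u)=0$ for a.e.\ $u\in(0,1)$. Reinserting into $l(t,s)=t^{-1}l(1,s/t)$ yields $l(t,s)=0$ for a.e.\ $(t,s)$ with $s<t$, and $l$ already vanishes on $s>t$ by the Volterra condition, giving the claim.

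I do not expect any genuine obstacle here: the only subtlety is that homogeneity is an everywhere relation while $L^2$-arguments only control almost-everywhere behaviour. This is harmless because the conclusion of the lemma is used in the proof of Proposition \ref{proposition} inside integrals (so an a.e.\ statement suffices); if an everywhere statement were needed, one would invoke any continuity hypothesis available, or simply interpret ``vanishes'' in the a.e.\ sense consistent with how the kernel was defined up to the $L^2$ equivalence class.
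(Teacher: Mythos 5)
Your proof is correct and rests on the same idea as the paper's: playing the degree-$(-1)$ homogeneity off against the square-integrability in \eqref{cond}. The execution differs slightly. The paper substitutes $(t,s)\mapsto(t/a,s/a)$ in the full double integral to get $\int_0^T\int_0^t h^2 = \int_0^{T/a}\int_0^{t'} h^2$ for all $a$, i.e.\ scale-invariance of the integral, and then concludes it must vanish (implicitly by letting $a\to\infty$, a step it leaves terse). You instead reduce to the profile $l(1,\cdot)$ via $l(t,s)=t^{-1}l(1,s/t)$ and factor the integral as $\bigl(\int_0^T t^{-1}\,\d t\bigr)\bigl(\int_0^1 l(1,u)^2\,\d u\bigr)$, so the divergence of the logarithmic integral does the work explicitly; this makes the final deduction more transparent than the paper's. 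Your closing remark about the a.e.\ versus everywhere reading of ``vanishes'' is apt and consistent with how the lemma is used in Proposition \ref{proposition}; note also that setting $a=1/t$ needs the point $(1,s/t)$ to lie in the domain, which is guaranteed by the paper's standing assumption $T>1$.
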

\begin{proof}
Let a Volterra kernel $h$ be $(-1)$-homogeneous. Combining the square integrability and the homogeneity property $h(t,s)=\frac{1}{a}\, h(\frac{t}{a}, \frac{s}{a})$, $a>0$, $0\leq s<t\leq T$, yields
$$
\int_0^T \int_0^t h(t,s)^2\, \d s\,\d t=\int_0^{\frac{T}{a}} \int_0^{\frac{t}{a}} h\left(\frac{t}{a}, \frac{s}{a}\right)^2\frac{1}{a^2}\, \d s\,\d t= \int_0^{\frac{T}{a}} \int_0^{ t'} h(t',s')^2\, \d s'\,\d t'
$$
which is finite for all $a>0$. This implies that $h$ vanishes  on $[0,T]\times[0.T]$.
\end{proof}

\begin{proof} 
(i) $X$ satisfies the condition \eqref{condition}, and by Theorem \eqref{GVP}, it admits a canonical Volterra representation with a standard Brownian motion $W$ and a kernel of the form of $k(t,s)=t^{\beta-\frac 12}F\left(\frac st\right)$, $F \in L^2(\R_+, \d u)$ vanishing on $(1,\infty]$. By using Fubini theorem,  \eqref{eqvol} gives
$$\widetilde{X}_t= X_t-\int_0^t \int_s^t k(t,u) l(u,s)\, \d u\, \d W_s, \quad 0 \leq t \leq T,$$
which proves the claim.\\
ii) Suppose that $\widetilde{X}$ is $\beta$-self--similar. From (i), $\widetilde{X}$ has the representation
$$
\widetilde{X}_t= \int_0^t \left(k(t,s)-t^{\beta-\frac 12}z(t,s)\right)\, \d W_s, \quad 0 \leq t\leq T,
$$
which is a canonical Volterra representation. Indeed, if $\LL$ denotes the Volterra integral operator associated with the Volterra kernel $l(t,s)$, the integral operator $\KK-\KK \LL=\KK(\I-\LL)$ that corresponds to the Volterra kernel $k(t,s)-t^{\beta-\frac 12}z(t,s)$ is also a Volterra integral operator, \cite{gohberg}. Here, $\I$ denotes the Identity operator. In particular, if we let $f \in L^2([0,T])$  be such that $\KK(\I-\LL)f=0$. By (i) in Remark \ref{remark}, the operator $\KK$ is injective, hence, $(\I-\LL)f=0$, i.e., $ \LL f=f$. Therefore, the Volterra integral operator $\LL$ admits an eigenvalue, which is a contradiction by (ii) in  Remark \ref{remark}. So, $f\equiv 0$.\\
Now,  using the fact that $\HH_{\widetilde{X}}(t)=\HH_{X}(t)$ for all $t$, $\widetilde{X}$  satisfies also the condition \eqref{condition}, and by Theorem \eqref{GVP},  the canonical  kernel $k(t,s)-t^{\beta-\frac 12}z(t,s)$ is ($\beta - \frac 12$)-homogeneous. For $a>0$, we write
$$k(t,s)-t^{\beta-\frac 12}z(t,s)=a^{\beta-\frac 12}\left(k\left(\frac ta,\frac sa\right)-t^{\beta-\frac 12}z\left(\frac ta,\frac sa\right)\right),$$
which implies that $z(t,s)=z(\frac ta,\frac sa)$, and by the change of variable , we have
$$\int_s^t F\left(\frac ut\right)\, l(u,s)\, \d u=\int_{\frac sa}^{\frac ta} F\left(\frac{u}{\frac ta}\right)\, l\left(u ,\frac sa\right)\, \d u = \int_s^t F\left(\frac vt\right)\, \frac 1a \,l\left(\frac va,\frac sa\right)\, \d v, \quad s<t,$$
which equivalent to
$$
\int_0^t F\left(\frac ut\right)\, l(u,s)\, \d u=\int_0^t F\left(\frac ut\right)\, \frac 1a \,l\left(\frac ua,\frac sa\right)\, \d v,\quad s<u<t.
$$
Taking derivatives with respect to $t$ on both sides, and since $F\left(\frac ut\right)\neq 0$, we obtain 
$$l(u,s)=\frac 1a \,l\left(\frac ua,\frac sa\right),\quad s<u,$$  which means that $l$ is homogeneous with degree $(-1)$. By applying the Lemma \ref{lemma}, we get $l\equiv 0$.\\
If $l\equiv 0$, we have $\E(\widetilde{X}_t\widetilde{X}_s)=\E(X_t X_s)$ which means that $\widetilde{X}\overset{d}{=}X$. Therefore, $\widetilde{X}$ is $\beta$--self-similar.
\end{proof}

\begin{remark}
 The importance of the  condition \eqref{condition} in Proposition \eqref{proposition} can been seen in the case of the fBm with index $H=1$, i.e. $B^H_t=tB^H_1$,  $0\leq t \leq T$. Here the condition \eqref{condition} fails. Since fBm is Gaussian, each process is determined by its covariance $\E(B^H_tB^H_s)=ts\, \E((B^H_1)^2)$. However, the laws of processes that correspond to different values of $\E((B^H_1)^2)$ are equivalent, on the other hand, these laws are different.  
\end{remark}



\end{document}